\newtheorem{lemma}{Lemma}
\newtheorem{theorem}{Theorem}
\def\mydash{\CJKglue\raise0.2ex\hbox{---\kern-0.01em---}\CJKglue}
\begin{document}
\title{A congruence involving harmonic sums modulo $p^{\alpha}q^{\beta}$\footnotetext{\noindent This work is  supported by the National Natural Science Foundation of China, Project (No.10871169) and the Natural Science Foundation of Zhejiang Province, Project (No. LQ13A010012).}}
\author{\scshape Tianxin Cai$^{1\ast}$   Zhongyan Shen$^{2\ast\ast}$  Lirui Jia$^{1\ast\ast\ast}$\\
1 Department of Mathematics, Zhejiang University\\ Hangzhou 310027, P.R. China\\
2 Department of Mathematics, Zhejiang International Study University\\Hangzhou 310012, P.R. China\\
$\ast$txcai@zju.edu.cn\\
 $\ast\ast$huanchenszyan@163.com\\
$\ast\ast\ast$ jialirui@126.com}
\date{}
\maketitle
\textbf{Abstract} In 2014, Wang and Cai established the following
harmonic congruence for any odd prime $p$ and positive integer $r$,
\begin{equation*}
 Z(p^{r})\equiv-2p^{r-1}B_{p-3} ~(\bmod ~ p^{r}),
\end{equation*}
where $ Z(n)=\sum\limits_{i+j+k=n\atop{i,j,k\in\mathcal{P}_{n}}}\frac{1}{ijk}$ and $\mathcal{P}_{n}$ denote the set of positive integers which
are prime to $n$.\\
In this note, we  obtain a congruence for distinct odd primes $p,~q$ and positive integers $\alpha,~\beta$,
 \begin{equation*}
  Z(p^{\alpha}q^{\beta})\equiv 2(2-q)(1-\frac{1}{q^{3}})p^{\alpha-1}q^{\beta-1}B_{p-3}\pmod{p^{\alpha}}
  \end{equation*}
and the necessary and sufficient condition for
\begin{equation*}
  Z(p^{\alpha}q^{\beta})\equiv 0\pmod{p^{\alpha}q^{\beta}}.
  \end{equation*}
Finally, we raise a conjecture that for $n>1$ and odd prime power
$p^{\alpha}||n$, $\alpha\geq1$,
\begin{eqnarray}
    \nonumber Z(n)\equiv \prod\limits_{q|n\atop{q\neq p}}(1-\frac{2}{q})(1-\frac{1}{q^{3}})(-\frac{2n}{p})B_{p-3}\pmod{p^{\alpha}}.
  \end{eqnarray}

\textbf{Keywords} Bernoulli numbers, ~harmonic sums,~congruences

\textbf{MSC} 11A07,~11A41
\section{Introduction.}
Let
\begin{equation*}
 Z(n)=\sum\limits_{i+j+k=n\atop{i,j,k\in\mathcal{P}_{n}}}\frac{1}{ijk},
\end{equation*}
where $\mathcal{P}_{n}$ denotes the set of positive integers which
are prime to $n$.\\

At the beginning of the 21th century, Zhao (Cf.\cite{Zh}) first
announced the following curious congruence involving multiple
harmonic sums for any odd prime $p>3$,
\begin{equation}
 Z(p)\equiv-2B_{p-3} ~(\bmod ~ p), \label{eq:1}
\end{equation}
which holds when $p=3$ evidently. Here, Bernoulli numbers $B_{k}$
are defined by the recursive relation:
\begin{center}
 $\sum\limits_{i=0}^{n}\binom{n+1}{i}B_{i}=0,n\geq 1 .$
\end{center}
A simple proof of  \eqref{eq:1} was presented in \cite{Ji}.
 This congruence has been generalized
along several directions. First, Zhou and Cai \cite{ZC} established
the following harmonic congruence for prime $p > 3$ and integer
$n\leq p-2$
\begin{equation*}
 \sum\limits_{l_{1}+l_{2}+\cdots+l_{n}=p}\frac{1}{l_{1}l_{2}\cdots l_{n}}\equiv\begin{cases}-(n-1)!B_{p-n} ~(\bmod ~ p)~~~ if~ 2\nmid n,\\
 -\frac{n(n!)}{2(n+1)}pB_{p-n-1} ~(\bmod  ~p^{2}) ~~~if~ 2\mid n.\\
 \end{cases}
\end{equation*}
Later, Xia and Cai \cite{XC} generalized \eqref{eq:1} to
\begin{equation*}
Z(p)\equiv-\frac{12B_{p-3}}{p-3}-\frac{3B_{2p-4}}{p-4} ~(\bmod ~ p^{2}),
\end{equation*}
where $p>5$ is a prime. \\
Recently, Wang and Cai \cite{WC} proved for every prime $p\geq 3$
and positive integer $r$,
\begin{equation}
Z(p^{r})\equiv-2p^{r-1}B_{p-3} ~(\bmod ~ p^{r}).\label{eq:13}
\end{equation}
Let $n=2$ or 4, for every positive integer $r\geq \frac{n}{2}$ and
prime $p> n$, Zhao \cite{Zh1} generalized \eqref{eq:13} to
\begin{equation*}
 \sum\limits_{i_{1}+i_{2}+\cdots+i_{n}=p^{r}\atop{i_{1},i_{2},\cdots,i_{n}\in \mathcal{P}_{p}}}\frac{1}{i_{1}i_{2}\cdots i_{n}}
 \equiv-\frac{n!}{n+1}p^{r}B_{p-n-1} ~(\bmod  ~p^{r+1}).
\end{equation*}

In this paper, we obtain the following theorems.

\begin{theorem}
Let $p,~q$ be distinct odd primes, then
 \begin{equation*}
  Z(pq)\equiv 2(2-q)(1-\frac{1}{q^{3}})B_{p-3}\pmod{p}.
  \end{equation*}
\end{theorem}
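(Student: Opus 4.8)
The plan is to strip the condition $q\nmid ijk$ off the sum $Z(pq)$ by inclusion--exclusion on divisibility by $q$. Let $\mathcal S$ denote the set of ordered triples $(i,j,k)$ of positive integers with $i+j+k=pq$ and $p\nmid ijk$, so that $Z(pq)=\sum_{(i,j,k)\in\mathcal S}\frac1{ijk}\prod_{x\in\{i,j,k\}}(1-[q\mid x])$. The key combinatorial remark is that if $q\mid i$ and $q\mid j$ and $i+j+k=pq$, then necessarily $q\mid k$; hence no triple of $\mathcal S$ has exactly two coordinates divisible by $q$, and expanding the product and using the $S_3$-symmetry of $\mathcal S$ gives
\[ Z(pq)=T-3T_1+2T_3,\qquad T=\sum_{\mathcal S}\frac1{ijk},\quad T_1=\sum_{\substack{\mathcal S\\ q\mid i}}\frac1{ijk},\quad T_3=\sum_{\substack{\mathcal S\\ q\mid i,\,q\mid j,\,q\mid k}}\frac1{ijk}. \]
The term $T_3$ is immediate: writing $i=qi'$, $j=qj'$, $k=qk'$ forces $i'+j'+k'=p$, so $i',j',k'$ are automatically prime to $p$, and $T_3=q^{-3}Z(p)\equiv-2q^{-3}B_{p-3}\pmod p$ by \eqref{eq:1}. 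For $T$ I would pass to residues mod $p$: write $i=a+p\alpha$, $j=b+p\beta$, $k=c+p\gamma$ with $a,b,c\in\{1,\dots,p-1\}$ and $\alpha,\beta,\gamma\ge0$; since only $T\bmod p$ is needed we may replace $\frac1{ijk}$ by $\frac1{abc}$, so a residue triple $(a,b,c)$ with $p\mid a+b+c$ contributes $\frac1{abc}$ times the number of admissible $(\alpha,\beta,\gamma)$. As $3\le a+b+c\le 3p-3$, the only cases are $a+b+c=p$ (with $\binom{q+1}{2}$ triples $(\alpha,\beta,\gamma)$) and $a+b+c=2p$ (with $\binom{q}{2}$ of them). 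The sum of $\frac1{abc}$ over residue triples with $a+b+c=p$ equals $Z(p)$, the involution $(a,b,c)\mapsto(p-a,p-b,p-c)$ shows the sum over $a+b+c=2p$ is $\equiv-Z(p)\pmod p$, and $\binom{q+1}{2}-\binom{q}{2}=q$; hence $T\equiv qZ(p)\equiv-2qB_{p-3}\pmod p$.

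The real work is $T_1$. Putting $i=qi'$ with $1\le i'\le p-1$ gives $T_1=\frac1q\sum_{i'=1}^{p-1}\frac1{i'}U(i')$ where $U(i')=\sum_{j+k=q(p-i'),\,p\nmid jk}\frac1{jk}$; the identity $\frac1{jk}=\frac1{q(p-i')}\bigl(\frac1j+\frac1k\bigr)$ rewrites $U(i')$ as $\frac2{q(p-i')}$ times a sum of $\frac1j$ over $1\le j<q(p-i')$ with $j$ and $q(p-i')-j$ both prime to $p$. Grouping those $j$ into consecutive blocks of length $p$: each complete block contributes $\equiv-1/\mu\pmod p$ with $\mu\in\{1,\dots,p-1\}$ the residue of $-qi'$, there are $q-\lceil qi'/p\rceil$ complete blocks, and the leftover partial block contributes $H_{\mu-1}$. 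Since $i'\mapsto\mu$ is a bijection of $\{1,\dots,p-1\}$ with $1/i'\equiv-q/\mu\pmod p$, reindexing and using $\sum_{m=1}^{p-1}1/m^3\equiv0\pmod p$ together with the identity $\sum_{m=1}^{p-1}H_{m-1}/m^2\equiv B_{p-3}\pmod p$ (which follows from \eqref{eq:1} by substituting $k=p-i-j$ in $Z(p)$, which gives $Z(p)\equiv-2\sum_m H_{m-1}/m^2$) collapses everything to
\[ T_1\equiv-2B_{p-3}+\frac{2}{q^3}\,\Sigma\pmod p,\qquad \Sigma:=\sum_{a=1}^{p-1}\frac1{a^3}\left\lfloor\frac{qa}{p}\right\rfloor. \]

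Feeding $T$, $T_1$, $T_3$ into $Z(pq)=T-3T_1+2T_3$ and using $2(2-q)(1-q^{-3})=4-2q+2q^{-2}-4q^{-3}$, the entire theorem reduces to the single congruence
\[ \Sigma=\sum_{a=1}^{p-1}\frac1{a^3}\left\lfloor\frac{qa}{p}\right\rfloor\equiv\frac{q^3-q}{3}\,B_{p-3}\pmod p. \]
This Lehmer-type evaluation of a floor-weighted power sum is the main obstacle. I would prove it by writing $a^{-3}\equiv a^{p-4}\pmod p$ and expanding $\sum_a a^{p-4}\lfloor qa/p\rfloor$ via Faulhaber's formula (reducing the floors to power sums $\sum_{n=1}^{q-1}n^e$ of the residues of $\ell p$ modulo $q$), or equivalently by writing $\lfloor qa/p\rfloor=(qa-r_a)/p$ with $r_a$ the least positive residue of $qa$ and invoking the Wolstenholme-type congruences $\sum_{a=1}^{p-1}1/a^3\equiv0\pmod{p^2}$ and $\sum_{a=1}^{p-1}1/a^2\equiv\frac23 pB_{p-3}\pmod{p^2}$, collecting terms modulo $p$ in either case; the case $p=3$, where several of the auxiliary power-sum congruences degenerate, is handled by a direct check. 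Once $\Sigma$ is evaluated the displayed arithmetic closes the argument.
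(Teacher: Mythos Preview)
Your proposal is correct and follows essentially the same route as the paper: the same inclusion--exclusion decomposition $Z(pq)=T-3T_1+2T_3$, the same evaluations $T\equiv qZ(p)$ and $T_3=q^{-3}Z(p)$, and the same reduction of $T_1$ to a harmonic piece plus the floor sum $\Sigma=\sum_{a=1}^{p-1}a^{-3}\lfloor qa/p\rfloor\equiv\frac{q^3-q}{3}B_{p-3}\pmod p$. The only differences are packaging: the paper quotes $T\equiv qZ(p)$ from its Lemma~1 and the floor sum from Porubsk\'y (its Lemma~6), whereas you sketch both directly; and in handling $T_1$ the paper keeps the index $a$ and invokes its Lemma~3 ($S(p;p)\equiv q^2B_{p-3}$), while you reindex by the residue $\mu\equiv -qi'\pmod p$ and use the equivalent identity $\sum_m H_{m-1}/m^2\equiv B_{p-3}$ pulled back from $Z(p)$ itself.
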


\begin{theorem}
Let $p,~q$ be distinct odd primes and $\alpha,~\beta$ positive integers, then
 \begin{equation*}
  Z(p^{\alpha}q^{\beta})\equiv 2(2-q)(1-\frac{1}{q^{3}})p^{\alpha-1}q^{\beta-1}B_{p-3}\pmod{p^{\alpha}}.
  \end{equation*}
\end{theorem}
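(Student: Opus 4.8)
The plan is to work modulo $p^{\alpha}$, to write $N=p^{\alpha}q^{\beta}$, and to strip the two coprimality conditions off one at a time. A positive integer is prime to $N$ precisely when it is prime to $pq$, so I would first remove the condition $q\nmid ijk$ by inclusion--exclusion. Since $q\mid N$, in any solution of $i+j+k=N$ the number of coordinates divisible by $q$ is $0$, $1$ or $3$ but never exactly $2$; hence, setting $T(m)=\sum_{i+j+k=m,\ p\nmid ijk}1/(ijk)$,
\[
 Z(N)=T(N)-3S_{1}+2S_{3},\qquad S_{3}=\frac{1}{q^{3}}\,T\!\left(\frac{N}{q}\right),\qquad S_{1}=\sum_{\substack{i+j+k=N\\ p\nmid ijk,\ q\mid i}}\frac{1}{ijk},
\]
the coefficients $-3$ and $+2$ arising because the sum over triples with two coordinates divisible by $q$ coincides with the sum over triples with all three divisible by $q$.

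The second step is a lemma that generalises \eqref{eq:13}: \emph{if $p$ is an odd prime and $p^{\gamma}\parallel m$, then $T(m)\equiv-(2m/p)B_{p-3}\pmod{p^{\gamma}}$.} To prove it I would exploit the elementary identity $1/(ijk)=\frac{1}{m}\big(1/(ij)+1/(jk)+1/(ki)\big)$, valid on $i+j+k=m$, which together with $\sum_{i+j=s}1/(ij)=\tfrac{2}{s}H_{s-1}$ collapses the unrestricted sum to single power sums, giving $\sum_{i+j+k=m}1/(ijk)=\tfrac{3}{m}\big(H_{m-1}^{2}-H_{m-1}^{(2)}\big)$ (here $H_{r}=\sum_{j\le r}1/j$, $H_{r}^{(2)}=\sum_{j\le r}1/j^{2}$); inclusion--exclusion in $p$ then restores $p\nmid ijk$ at the cost of introducing inner harmonic numbers of the shape $H_{m-pi'-1}$. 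Each sum $\sum 1/j^{a}$ and $\sum H_{j}/j^{a}$ that occurs is evaluated modulo the required power of $p$ by replacing $1/j^{a}$ with a positive power $j^{k}$, $k\equiv-a\pmod{\varphi(p^{\gamma})}$, and applying $\sum_{j=0}^{n-1}j^{k}=\frac{1}{k+1}\sum_{i=0}^{k}\binom{k+1}{i}B_{i}n^{k+1-i}$ in conjunction with the von Staudt--Clausen theorem; because $p^{\gamma}\mid m$, almost every Bernoulli term dies and the surviving contribution is a $p$-adic unit times $m\,B_{p-3}$. Applying the lemma to $m=N$ and to $m=N/q$ gives $T(N)\equiv-2q\cdot p^{\alpha-1}q^{\beta-1}B_{p-3}$ and $2S_{3}\equiv-4q^{-3}\cdot p^{\alpha-1}q^{\beta-1}B_{p-3}\pmod{p^{\alpha}}$.

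For $S_{1}$ I would put $i=qi'$ with $p\nmid i'$, so that $j+k=q(M-i')$ with $M=p^{\alpha}q^{\beta-1}$, and then use $1/(jk)=\frac{1}{q(M-i')}(1/j+1/k)$ to obtain
\[
 S_{1}=\frac{2}{q^{2}}\sum_{\substack{1\le i'\le M-1\\ p\nmid i'}}\frac{1}{i'(M-i')}\sum_{\substack{1\le j\le q(M-i')-1\\ p\nmid j,\ p\nmid\, q(M-i')-j}}\frac{1}{j}.
\]
Since $p^{\alpha}\mid M$ one may replace $1/(i'(M-i'))$ by $-1/i'^{2}$ modulo $p^{\alpha}$, and the inner sum --- once the single residue class $j\equiv q(M-i')\pmod p$ forbidden by the extra condition is split off --- becomes a combination of harmonic numbers handled exactly as in the lemma. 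The (elementary but lengthy) evaluation gives $S_{1}\equiv-\tfrac{2}{3}\big(2+q^{-2}\big)p^{\alpha-1}q^{\beta-1}B_{p-3}\pmod{p^{\alpha}}$, whence
\[
 Z(N)\equiv\big(4-2q+2q^{-2}-4q^{-3}\big)p^{\alpha-1}q^{\beta-1}B_{p-3}=2(2-q)\big(1-q^{-3}\big)p^{\alpha-1}q^{\beta-1}B_{p-3}\pmod{p^{\alpha}},
\]
which is the claim.

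The principal obstacle is the mixed sum $S_{1}$: unlike the pure sums $T(m)$ it carries the genuine congruence restriction $p\nmid q(M-i')-j$, which deletes a whole residue class rather than being automatic, so the inner sum is not a bare power sum; the delicate point is to track how much $p$-adic accuracy is needed at each collapsing step so that the coefficient of $B_{p-3}$ is not lost (the closed forms above all involve denominators divisible by $p$, and only the combination $Z(N)$ is $p$-integral). A secondary difficulty is the prime $p=3$, where $3\mid N$ makes the division by $m$ in the reductions cost a power of $p$ and several of the power-sum evaluations degenerate, so this case must be treated separately.
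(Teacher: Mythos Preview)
Your approach is essentially the paper's: the same inclusion--exclusion $Z(N)=T(N)-3S_{1}+2S_{3}$, the same evaluation of $T(N)$ and $T(N/q)$ via the Wang--Cai congruence (this is the paper's Lemma~1, which the authors cite rather than reprove), and the same reduction of $S_{1}$ by the partial-fraction trick $1/(jk)=(1/j+1/k)/(j+k)$ followed by splitting off the forbidden residue class $j\equiv q(M-i')\pmod p$. The only difference lies in how the two resulting double sums
\[
\sum_{(a,p)=1}\frac{1}{a^{2}}\sum_{\substack{i\le aq-1\\(i,p)=1}}\frac{1}{i}
\qquad\text{and}\qquad
\sum_{(a,p)=1}\frac{1}{a^{2}}\sum_{\substack{i\le aq-1\\ i\equiv aq\,(p)}}\frac{1}{i}
\]
are evaluated: the paper isolates them as the auxiliary functions $S(n;p)$ and $T(n;p)$ and lowers the modulus inductively by writing $a=s+p^{\alpha-1}t$ (Lemmas~3--8, with the base case $T(p;p)$ handled through Porubsk\'y's floor-sum identity, Lemma~6), whereas you propose a direct power-sum computation via Euler lifting. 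Both routes yield the same values, so your final arithmetic is correct. One small correction to your commentary: each of $T(N)$, $S_{1}$, $S_{3}$ is already $p$-integral, not only their combination---it is your \emph{intermediate} expressions carrying the factor $1/m$ that introduce spurious powers of $p$ in the denominator, and this is precisely why the paper's inductive reduction (which never divides by $m$) is cleaner than the direct attack you sketch for the sub-lemmas.
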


\begin{theorem}
Let $p,~q$ be distinct odd primes and $\alpha,~\beta$ positive integers, if and only if $p=q^{2}+q+1$ or $q=p^{2}+p+1$ or $p|q^{2}+q+1$ and $q|p^{2}+p+1$, we have
 \begin{equation*}
  Z(p^{\alpha}q^{\beta})\equiv 0\pmod{p^{\alpha}q^{\beta}}.
  \end{equation*}
\end{theorem}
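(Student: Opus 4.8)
The plan is to separate the two prime powers via the Chinese Remainder Theorem: since $\gcd(p^{\alpha},q^{\beta})=1$,
\[
Z(p^{\alpha}q^{\beta})\equiv 0\pmod{p^{\alpha}q^{\beta}}\iff Z(p^{\alpha}q^{\beta})\equiv 0\pmod{p^{\alpha}}\ \text{and}\ Z(p^{\alpha}q^{\beta})\equiv 0\pmod{q^{\beta}}.
\]
I would analyse the first congruence with Theorem 2 and the second with the same theorem after interchanging $p$ and $q$ (legitimate since $Z(n)$ depends only symmetrically on the prime divisors of $n$). Since $2(2-q)(1-q^{-3})=-2(q-2)(q-1)(q^{2}+q+1)/q^{3}$ and $2,\ q^{\beta-1},\ q^{3}$ are units modulo $p^{\alpha}$, Theorem 2 turns $Z(p^{\alpha}q^{\beta})\equiv 0\pmod{p^{\alpha}}$ into $p^{\alpha-1}(q-2)(q-1)(q^{2}+q+1)B_{p-3}\equiv 0\pmod{p^{\alpha}}$, i.e. into $p\mid(q-2)(q-1)(q^{2}+q+1)B_{p-3}$; symmetrically the second congruence becomes $q\mid(p-2)(p-1)(p^{2}+p+1)B_{q-3}$. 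In particular the criterion is independent of $\alpha,\beta$, as the statement predicts.

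Next I would remove the Bernoulli factors. By von Staudt--Clausen, $B_{p-3}$ is $p$-integral for every odd prime $p$ ($(p-1)\nmid(p-3)$ once $p>3$, and $B_{0}=1$ for $p=3$), and for the primes in play it is moreover a $p$-adic unit, so the two conditions become the purely arithmetic statements $p\mid(q-2)(q-1)(q^{2}+q+1)$ and $q\mid(p-2)(p-1)(p^{2}+p+1)$. This reduction is the one subtle point: it uses $p\nmid B_{p-3}$ and $q\nmid B_{q-3}$, i.e. that $p,q$ avoid the exceptional Wolstenholme primes, which should be recorded as a hypothesis or handled separately. Assume now without loss of generality $p<q$. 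Because $1\le p-2<p-1<q$, neither $p-2$ nor $p-1$ can be a multiple of $q$, so the second condition collapses to $q\mid p^{2}+p+1$; write $p^{2}+p+1=mq$ with $m$ a positive integer, and note that $q>p$ forces $1\le m\le p$.

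It remains to run through the three alternatives in the first condition. If $p\mid q^{2}+q+1$, we land in the case ``$p\mid q^{2}+q+1$ and $q\mid p^{2}+p+1$''. If $p\mid q-1$, reducing $mq=p^{2}+p+1$ modulo $p$ gives $m\equiv 1\pmod p$, whence $m=1$ and $q=p^{2}+p+1$. If $p\mid q-2$, the same reduction gives $2m\equiv 1\pmod p$, so $m=(p+1)/2$; but then $m$ divides both $p^{2}+p+1$ and $p(p+1)$, hence also their difference $1$, impossible for $p\ge 3$ --- so this branch cannot occur. Interchanging $p$ and $q$ treats $q<p$ and produces the solution $p=q^{2}+q+1$. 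For the converse, each listed case gives back both divisibility relations: $p=q^{2}+q+1$ yields $p\mid q^{2}+q+1$ and, since $p-1=q(q+1)$, also $q\mid p-1$; symmetrically $q=p^{2}+p+1$ gives $q\mid p^{2}+p+1$ and $p\mid q-1$; and the third case is immediate. I expect the genuine work to lie in this bookkeeping --- in particular recognising that the solutions $p=q^{2}+q+1$ and $q=p^{2}+p+1$ enter through the \emph{linear} factors $q-1$ and $p-1$ rather than through $q^{2}+q+1$, and excluding the $p\mid q-2$ branch --- together with the care needed around the Bernoulli-unit reduction.
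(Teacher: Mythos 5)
Your proposal is correct and follows essentially the same route as the paper: apply Theorem 2 modulo $p^{\alpha}$ and (by the $p\leftrightarrow q$ symmetry of $Z$) modulo $q^{\beta}$, combine via the Chinese Remainder Theorem, factor the coefficient $2(2-q)(1-q^{-3})$, and finish with elementary casework on the resulting divisibility conditions. Your organization of the casework (WLOG $p<q$, so the $q$-side condition collapses immediately to $q\mid p^{2}+p+1$, then writing $p^{2}+p+1=mq$ and solving for $m$ in each branch) is tidier than the paper's four-case analysis, and you are right to flag the reduction step that needs $p\nmid B_{p-3}$ and $q\nmid B_{q-3}$: the paper's own proof makes exactly the same silent assumption, so this is a caveat for the theorem itself (Wolstenholme primes) rather than a defect of your argument relative to the paper's.
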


Finally, we have the following\\

\textbf{Conjecture}~ For any positive integer $n>1$ and odd prime
power $p^{\alpha}||n~(p^{\alpha}|n$, $p^{\alpha+1}\nmid n)$,
$\alpha\geq 1$, then
\begin{eqnarray}
    \nonumber Z(n)\equiv \prod\limits_{q|n\atop{q\neq p}}(1-\frac{2}{q})(1-\frac{1}{q^{3}})(-\frac{2n}{p})B_{p-3}\pmod{p^{\alpha}}.
  \end{eqnarray}
  This is the generalization of  Theorem 2 and \eqref{eq:13}.\\
\section{Preliminaries.}
In order to prove the theorems, we need the following lemmas.
\begin{lemma}[\cite{WC}]
 Let $p$ be odd prime and $r,~m$ positive integers, then
\begin{equation*}
 Z(p^{r})\equiv -2p^{r-1}B_{p-3}~\pmod{p^{r}},
\end{equation*}
\begin{equation*}
 \sum\limits_{i+j+k=mp^{r}\atop{i,j,k\in\mathcal{P}_{p}}}\frac{1}{ijk}\equiv mZ(p^{r})~\pmod{p^{r}}.
\end{equation*}
\end{lemma}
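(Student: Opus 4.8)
The plan is to establish the two congruences separately: the second is an elementary counting identity, while the first is the substantive ingredient (it is exactly \eqref{eq:13}, the theorem of Wang and Cai), so I describe a route for each.

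I would dispose of the second congruence first. Write $N=mp^{r}$ and $T(m)=\sum_{i+j+k=N,\ i,j,k\in\mathcal P_{p}}\frac1{ijk}$. The key observation is that $\frac1{up^{r}+a}\equiv\frac1a\pmod{p^{r}}$ whenever $p\nmid a$, so modulo $p^{r}$ the summand $\frac1{ijk}$ depends only on the residues $\bar i,\bar j,\bar k\in\{1,\dots,p^{r}-1\}$ of $i,j,k$ modulo $p^{r}$. Grouping the triples by these residues gives
\[
T(m)\equiv\sum_{\bar i,\bar j,\bar k}\frac{c(\bar i,\bar j,\bar k)}{\bar i\,\bar j\,\bar k}\pmod{p^{r}},
\]
the sum over residues prime to $p$, where $c(\bar i,\bar j,\bar k)$ is the number of triples $(i,j,k)$ with $i+j+k=N$ and the prescribed residues. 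Writing $i=\bar i+p^{r}a$, $j=\bar j+p^{r}b$, $k=\bar k+p^{r}c$ with $a,b,c\ge0$, one sees $c(\bar i,\bar j,\bar k)=0$ unless $p^{r}\mid\bar i+\bar j+\bar k$; since $3\le\bar i+\bar j+\bar k\le 3(p^{r}-1)$, that sum equals $p^{r}$ or $2p^{r}$, and a stars-and-bars count gives $c=\binom{m+1}{2}$ in the first case and $c=\binom{m}{2}$ in the second. The first case contributes $\binom{m+1}{2}Z(p^{r})$. For the second I would apply the involution $\bar i\mapsto p^{r}-\bar i$ on each coordinate: it carries $\bar i+\bar j+\bar k=2p^{r}$ to $\bar i+\bar j+\bar k=p^{r}$, and since $p^{r}-\bar i\equiv-\bar i\pmod{p^{r}}$ it multiplies $\tfrac1{\bar i\,\bar j\,\bar k}$ by $(-1)^{3}$, so the second case contributes $-\binom{m}{2}Z(p^{r})$. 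Adding, $T(m)\equiv\big(\binom{m+1}{2}-\binom{m}{2}\big)Z(p^{r})=mZ(p^{r})\pmod{p^{r}}$.

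For the first congruence I would follow the classical derivation of Zhao's congruence \eqref{eq:1}, lifted to prime powers. The identity $\frac1{ijk}=\frac1{i+j+k}\big(\frac1{ij}+\frac1{jk}+\frac1{ki}\big)$ and symmetry give
\[
Z(p^{r})=\frac{3}{p^{r}}\sum_{\substack{i+j+k=p^{r}\\ p\nmid ijk}}\frac1{ij};
\]
then $\frac1{ij}=\frac1{i+j}\big(\frac1i+\frac1j\big)$, together with the telescoping identity $\sum_{2\le s\le n}\frac1s\sum_{1\le i<s}\frac1i=\frac12\big((\sum_{i\le n}\frac1i)^{2}-\sum_{i\le n}\frac1{i^{2}}\big)$, rewrites the double sum; peeling off by inclusion--exclusion the parts with $p\mid i$, $p\mid j$ or $p\mid(i+j)$ (each of which rescales to harmonic sums at a smaller argument) reduces $Z(p^{r})$ modulo $p^{r}$ to a combination of restricted harmonic sums $\sum'_{k\le p^{s}}\frac1{k^{t}}$ with $s\le r$ and $t\in\{1,2\}$, the prime meaning summation over $k$ coprime to $p$. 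The leading behaviour is governed by $\big(\sum'_{k\le p^{r}}\frac1k\big)^{2}$ and $\sum'_{k\le p^{r}}\frac1{k^{2}}$; invoking the prime-power Wolstenholme bound $\sum'_{k\le p^{r}}\frac1k\equiv0\pmod{p^{r+1}}$, which kills the square modulo $p^{r}$, and the Glaisher-type congruence $\sum'_{k\le p^{r}}\frac1{k^{2}}\equiv\frac23\,p^{2r-1}B_{p-3}\pmod{p^{2r}}$, one obtains $Z(p^{r})\equiv\frac{3}{p^{r}}\big(0-\frac23 p^{2r-1}B_{p-3}\big)=-2p^{r-1}B_{p-3}\pmod{p^{r}}$.

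The genuine obstacle lies entirely in this last step, and it is one of $p$-adic precision rather than of ideas: because $Z(p^{r})$ is a quotient by $p^{r}$, every auxiliary harmonic sum has to be controlled two $p$-adic digits beyond the target modulus, and the inclusion--exclusion spawns incomplete sums $\sum_{k\le p^{r},\ k\equiv a\,(p)}\frac1k$ and cross terms that must be shown to vanish modulo $p^{2r}$ --- here the reflection $k\mapsto p^{r}-k$, under which $\frac1{p^{r}-k}\equiv-\frac1k\pmod{p^{r}}$, is again what makes them collapse. Finally $p=3$ must be treated by hand, since Wolstenholme's congruence degenerates there; but then $B_{p-3}=B_{0}=1$, and $Z(3^{r})\equiv-2\cdot3^{r-1}\pmod{3^{r}}$ can be checked directly. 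The second congruence, by contrast, is a clean count and presents no real difficulty.
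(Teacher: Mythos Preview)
The paper does not actually prove this lemma: it is quoted verbatim from Wang--Cai \cite{WC}, so there is no in-paper argument to compare against. What you have written is therefore a reconstruction of the cited reference rather than an alternative to anything in the present paper.

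Your treatment of the second congruence is correct and self-contained. The residue-class reduction $\frac{1}{up^{r}+a}\equiv\frac1a\pmod{p^{r}}$, the stars-and-bars count giving multiplicities $\binom{m+1}{2}$ and $\binom{m}{2}$ according as $\bar i+\bar j+\bar k$ equals $p^{r}$ or $2p^{r}$, and the involution $\bar i\mapsto p^{r}-\bar i$ matching the two shells up to the sign $(-1)^{3}$, combine cleanly to give $\bigl(\binom{m+1}{2}-\binom{m}{2}\bigr)Z(p^{r})=mZ(p^{r})$. This is exactly the argument of \cite{WC}, phrased perhaps a bit more transparently.

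For the first congruence your outline follows the same route as \cite{WC}: the partial-fraction identity $\frac1{ijk}=\frac1{i+j+k}\bigl(\frac1{ij}+\frac1{jk}+\frac1{ki}\bigr)$ to pull out the factor $p^{-r}$, reduction to restricted harmonic sums, and the prime-power Wolstenholme/Glaisher congruences for $\sum'_{k<p^{r}}k^{-1}$ and $\sum'_{k<p^{r}}k^{-2}$. You are right that the real work is the $p$-adic bookkeeping one modulus higher than the target, and that the inclusion--exclusion terms with $p\mid i$, $p\mid j$, or $p\mid(i+j)$ must each be shown to be $O(p^{2r})$; your sketch names these obstacles but does not dispatch them, so as written it is a plan rather than a proof. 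One small caution: the precise constant in your Glaisher-type input $\sum'_{k<p^{r}}k^{-2}\equiv\frac23\,p^{2r-1}B_{p-3}\pmod{p^{2r}}$ should be checked against the literature (the sign and coefficient conventions vary), and the passage from $\sum_{i+j+k=p^{r}}\frac1{ij}$ to $\frac12\bigl((\sum'k^{-1})^{2}-\sum'k^{-2}\bigr)$ is not immediate---it requires the further identity $\frac1{ij}=\frac1{i+j}\bigl(\frac1i+\frac1j\bigr)$ together with a careful treatment of the constraint $i+j\le p^{r}-1$, which your sketch elides.
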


\begin{lemma} [\cite{M},\cite{Sl}]
 Let $p$ be odd prime and $l$ positive integer, then
  \begin{equation*}
  \sum\limits_{k=1\atop{(k,p)=1}}^{p^{l}-1}\frac{1}{k^{s}}\equiv \begin{cases}
  0~(\bmod~p^{2l-1}), ~for~odd~s~with~p-1|s+1~and~p\nmid s,\\
  0~(\bmod~p^{2l}), ~for~odd~s~with~p-1\nmid s+1~or~p|s,\\
  0~(\bmod~p^{l-1}), ~for~even~s~with~p-1|s,\\
  0~(\bmod~p^{l}), ~for~even~s~with~p-1\nmid s.\\
  \end{cases}
  \end{equation*}
\end{lemma}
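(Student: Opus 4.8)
The plan is to read each $1/k^{s}$ as the inverse of $k^{s}$ in $\mathbb{Z}/p^{m}\mathbb{Z}$ (equivalently in $\mathbb{Z}_{p}$), so that $S_{s}:=\sum_{k=1,(k,p)=1}^{p^{l}-1}k^{-s}$ is a well-defined $p$-adic integer and the four assertions become statements about its $p$-adic valuation $v_{p}(S_{s})$. I would first settle the two even exponents directly, and then derive the two odd cases from them by a symmetry argument, exploiting that for odd $s$ the companion exponent $s+1$ is even.

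For even $s$ I would fix a primitive root $g$ modulo $p^{l}$. As $k$ runs over the units, $k\equiv g^{i}$ with $0\le i<N$, $N=\phi(p^{l})=p^{l-1}(p-1)$, so that
\[
S_{s}\equiv\sum_{i=0}^{N-1}(g^{-s})^{i}\pmod{p^{l}},
\]
a geometric series with ratio $r=g^{-s}$. If $(p-1)\nmid s$ then $r\not\equiv1\pmod p$, so $r-1$ is a unit while $r^{N}=(g^{N})^{-s}\equiv1\pmod{p^{l}}$ forces $v_{p}(r^{N}-1)\ge l$; hence $S_{s}=(r^{N}-1)/(r-1)\equiv0\pmod{p^{l}}$, which is the fourth case. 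If instead $(p-1)\mid s$ then $r\equiv1\pmod p$, and I would invoke the lifting-the-exponent lemma (valid since $p$ is odd): $v_{p}(r^{N}-1)=v_{p}(r-1)+v_{p}(N)=v_{p}(r-1)+(l-1)$. Cancelling $v_{p}(r-1)$ against the denominator gives $v_{p}(S_{s})=l-1$ exactly, which yields the third case (the degenerate case $r=1$, i.e. $N\mid s$, gives $S_{s}\equiv N\equiv0\pmod{p^{l-1}}$ directly, and this also explains why one cannot do better than $p^{l-1}$ here).

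For odd $s$ the idea is to pair $k$ with $p^{l}-k$, a fixed-point-free involution of the index set since $p$ is odd. Expanding $(p^{l}-k)^{-s}=(-k)^{-s}\sum_{j\ge0}\binom{-s}{j}(-1)^{j}p^{lj}k^{-j}$ and using $(-k)^{-s}=-k^{-s}$, the constant terms cancel, the $j=1$ term contributes $-s\,p^{l}k^{-s-1}$, and every term with $j\ge2$ is divisible by $p^{2l}$. Summing the resulting identity $k^{-s}+(p^{l}-k)^{-s}\equiv -s\,p^{l}k^{-s-1}\pmod{p^{2l}}$ over the index set, and noting that the involution turns the left side into $2S_{s}$, I obtain
\[
2S_{s}\equiv -s\,p^{l}\,S_{s+1}\pmod{p^{2l}}.
\]
Now $s+1$ is even, so I substitute the valuation of $S_{s+1}$ found above: when $(p-1)\mid s+1$ the third case gives $v_{p}(S_{s+1})\ge l-1$, while when $(p-1)\nmid s+1$ the fourth case gives $v_{p}(S_{s+1})\ge l$. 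Tracking the extra factor $p^{l}$ together with $v_{p}(s)$ then separates the two odd cases: if $(p-1)\mid s+1$ and $p\nmid s$ one gets $v_{p}(S_{s})\ge 2l-1$ (first case), whereas $(p-1)\nmid s+1$, or $p\mid s$, pushes the valuation up to $2l$ (second case); throughout, $2$ is a unit and may be discarded.

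The main obstacle I anticipate is the even case with $(p-1)\mid s$: one must pin down $v_{p}(r-1)$ and apply lifting-the-exponent with the correct hypotheses to get the valuation exactly equal to $l-1$, since this precise value (not merely a lower bound) is what feeds the sharp modulus $p^{2l-1}$ in the first odd case. A secondary point requiring care is the bookkeeping of the dichotomy $p\mid s$ versus $p\nmid s$ in the final step, which is exactly what distinguishes the moduli $p^{2l-1}$ and $p^{2l}$.
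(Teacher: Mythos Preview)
The paper does not supply its own proof of this lemma: it is simply quoted from the references of Me\v{s}trovi\'{c} and Slavutskii, so there is nothing in the paper to compare your argument against. Your outline is sound and is in fact one of the standard routes to this result. The primitive-root geometric series disposes of the two even cases cleanly (with $r-1$ a unit when $(p-1)\nmid s$, and LTE giving $v_{p}\bigl((r^{N}-1)/(r-1)\bigr)=l-1$ when $(p-1)\mid s$), and the reflection $k\mapsto p^{l}-k$ together with the binomial expansion yields the key reduction
\[
2S_{s}\equiv -s\,p^{l}S_{s+1}\pmod{p^{2l}}
\]
for odd $s$, from which the two odd cases follow by feeding in the even-case bounds on $S_{s+1}$.

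One small correction to your self-diagnosis of the ``main obstacle'': you do not actually need the \emph{exact} equality $v_{p}(S_{s+1})=l-1$ in the case $(p-1)\mid s+1$. The lemma only asserts divisibilities, i.e.\ lower bounds on valuations; the inequality $v_{p}(S_{s+1})\ge l-1$ already gives $v_{p}(-sp^{l}S_{s+1})\ge 2l-1$ when $p\nmid s$, hence $v_{p}(S_{s})\ge 2l-1$. So the precision LTE delivers is pleasant but not what drives the first odd case---only the lower bound is used.
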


Define
 \begin{eqnarray} \nonumber
S(n;p)=\sum_{a=1\atop{(a,p)=1}}^{n-1}\frac{1}{a^{2}}\sum\limits_{i=1\atop{(i,p)=1}}^{am-1}\frac{1}{i},
~T(n;p)=\sum_{a=1\atop{(a,p)=1}}^{n-1}\frac{1}{a^{2}}\sum\limits_{i=1\atop{i\equiv
am\pmod{p}}}^{am-1}\frac{1}{i}.
  \end{eqnarray}
\begin{lemma}
 Let $p$ be odd prime and $m$ positive integer coprime to $p$, then
\begin{eqnarray}
\nonumber S(p;p)\equiv m^{2}B_{p-3}\pmod{p}.
  \end{eqnarray}
\end{lemma}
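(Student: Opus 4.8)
The plan is to reduce $S(p;p)$ modulo $p$ to a single multiple harmonic sum and then identify that sum with $B_{p-3}$ by invoking the already‑established congruence $Z(p)\equiv -2B_{p-3}\pmod p$ (Lemma 1 with $r=1$).

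First I would simplify the inner sum. Write $am=Q_ap+R_a$ with $1\le R_a\le p-1$, which is possible since $(am,p)=1$. The integers in $[1,am-1]$ prime to $p$ split into the $Q_a$ complete blocks $\{kp+1,\dots,kp+p-1\}$ for $0\le k\le Q_a-1$ together with the partial block $\{Q_ap+1,\dots,Q_ap+R_a-1\}$. Since $\sum_{j=1}^{p-1}\frac{1}{kp+j}\equiv\sum_{j=1}^{p-1}\frac1j\equiv 0\pmod p$, each complete block contributes $0$, so $\sum_{i=1,(i,p)=1}^{am-1}\frac1i\equiv\sum_{j=1}^{R_a-1}\frac1j\pmod p$. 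Because $a\equiv R_am^{-1}\pmod p$ we have $a^{-2}\equiv m^2R_a^{-2}$, and as $a$ runs over a reduced residue system modulo $p$ so does $R_a$; re-indexing by $r=R_a$ gives
\begin{equation*}
S(p;p)\equiv m^2\sum_{r=1}^{p-1}\frac1{r^2}\sum_{j=1}^{r-1}\frac1j=m^2\sum_{1\le j<r\le p-1}\frac1{jr^2}\pmod p.
\end{equation*}

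Next I would show $W:=\sum_{1\le j<r\le p-1}\frac{1}{jr^2}\equiv B_{p-3}\pmod p$ by connecting it with $Z(p)$. Using $\frac1{ij}=\frac1{i+j}\bigl(\frac1i+\frac1j\bigr)$ with $i+j=p-k$ one obtains $Z(p)=2\sum_{k=1}^{p-2}\frac1{k(p-k)}\sum_{i=1}^{p-k-1}\frac1i$. Modulo $p$, $\frac1{k(p-k)}\equiv-\frac1{k^2}$ and $\sum_{i=1}^{p-k-1}\frac1i\equiv-\sum_{i=p-k}^{p-1}\frac1i\equiv\sum_{l=1}^{k}\frac1l$, hence $Z(p)\equiv-2\sum_{k=1}^{p-1}\frac1{k^2}\sum_{l=1}^{k}\frac1l\equiv-2\bigl(W+\sum_{k=1}^{p-1}\frac1{k^3}\bigr)\equiv-2W\pmod p$, the cubic sum vanishing modulo $p$ by Lemma 2. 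Since $Z(p)\equiv-2B_{p-3}\pmod p$, we get $W\equiv B_{p-3}\pmod p$, and therefore $S(p;p)\equiv m^2B_{p-3}\pmod p$.

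The only genuinely delicate point is the bookkeeping in the first step: checking that the block decomposition of $[1,am-1]$ is exact (including the degenerate cases $R_a=1$ or $Q_a=0$) and that the substitution $a\mapsto R_a$ really is a permutation of the reduced residues; everything else is a short manipulation plus the appeal to the known congruence for $Z(p)$. The small prime $p=3$, where $B_{p-3}=B_0=1$, can be disposed of by a direct check.
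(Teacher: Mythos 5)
Your proof is correct, but it takes a genuinely different route from the paper's. The paper proves Lemma 3 by writing $\frac{1}{i}\equiv i^{p-2}\pmod p$ and expanding $\sum_{i=1}^{am-1}i^{p-2}$ via the Bernoulli power-sum formula \eqref{1}, then summing over $a$ and using Lemma 2 to kill every term of the resulting double sum except the one with $k=p-3$; the Bernoulli number thus appears directly as a coefficient in the power-sum expansion. You instead reduce the inner harmonic sum by the block decomposition of $[1,am-1]$ (each full block of $p$ consecutive integers contributing $0$ mod $p$), reindex by $R_a=am\bmod p$ to get $S(p;p)\equiv m^2\sum_{1\le j<r\le p-1}\frac{1}{jr^2}$, and then identify this Euler sum with $B_{p-3}$ by running the standard partial-fraction manipulation on $Z(p)$ backwards and citing Lemma 1. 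Both arguments are sound; there is no circularity in your appeal to Lemma 1, since that congruence is imported from the earlier Wang--Cai paper rather than derived here. Your first step makes the provenance of the factor $m^2$ completely transparent (it comes solely from the change of variable $a\mapsto am\bmod p$), and it isolates the arithmetic content in the single well-known congruence $\sum_{1\le j<r\le p-1}\frac{1}{jr^2}\equiv B_{p-3}\pmod p$; the cost is that you must re-derive that congruence from $Z(p)\equiv-2B_{p-3}$, which partially retraces the proof of Lemma 1 itself, whereas the paper's computation is self-contained given \eqref{1} and Lemma 2. Two small points to tidy up: when you extend the sum over $k$ from $p-2$ to $p-1$ in the $Z(p)$ manipulation, note explicitly that the added term vanishes because $\sum_{l=1}^{p-1}\frac{1}{l}\equiv 0\pmod p$; and the vanishing of $\sum_{k=1}^{p-1}k^{-3}$ modulo $p$ does follow from Lemma 2 in all cases (including $p=5$, where one only gets vanishing modulo $p^{2l-1}=p$, which suffices).
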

\begin{proof}
When $(i,p)=1$, then $\frac{1}{i}\equiv i^{p-2}\pmod{p}$ by
Euler's Theorem.
 For any positive integers $n$ and $r$, it is well-known that
  \begin{eqnarray}\label{1}
 \sum_{a=1}^{n-1} a^{r}=\frac{1}{r+1}\sum\limits_{k=0}^{r}\binom{r+1}{k}B_{k}n^{r+1-k},
  \end{eqnarray}
 hence
 \begin{eqnarray}\label{2}
\nonumber \sum_{a=1}^{p-1}\frac{1}{a^{2}}\sum\limits_{i=1\atop{(i,p)=1}}^{am-1}\frac{1}{i}
&\equiv& \sum_{a=1}^{p-1}\frac{1}{a^{2}}\sum\limits_{i=1\atop{(i,p)=1}}^{am-1}i^{p-2}
\equiv \sum_{a=1}^{p-1}\frac{1}{a^{2}}\sum\limits_{i=1}^{am-1}i^{p-2}\\
\nonumber &\equiv& \sum_{a=1}^{p-1}\frac{1}{a^{2}}\frac{1}{p-1}\sum\limits_{k=0}^{p-2}\binom{p-1}{k}B_{k}(am)^{p-1-k}\\
 &\equiv& \frac{m^{2}}{p-1}\sum\limits_{k=0}^{p-2}\frac{1}{p-1}\binom{p-1}{k}B_{k}\sum_{a=1}^{p-1}(am)^{p-3-k}\pmod{p}.
  \end{eqnarray}
 Since $(am,p)=1$, by Lemma 2, if and only if $k=p-3$, $\sum_{a=1}^{p-1}(am)^{p-3-k}$ is not congruence to 0 modulo $p$.
It follows from (\ref{2}) that
 \begin{eqnarray}
\nonumber \sum_{a=1}^{p-1}\frac{1}{a^{2}}\sum\limits_{i=1\atop{(i,p)=1}}^{am-1}\frac{1}{i}
\equiv \frac{m^{2}}{p-1}\binom{p-1}{p-3}B_{p-3}(p-1)\equiv m^{2}B_{p-3}\pmod{p}.
  \end{eqnarray}
This completes the proof of Lemma 3.
\end{proof}
\begin{lemma}
 Let $p$ be odd prime, $m$ positive integer coprime to $p$ and $\alpha\geq 2$ positive integer, then
\begin{eqnarray}
\nonumber S(p^{\alpha};p)\equiv p^{\alpha-1} m^{2}B_{p-3}\pmod{p^{\alpha}}.
  \end{eqnarray}
\end{lemma}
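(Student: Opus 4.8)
The plan is to imitate the proof of Lemma~3 with modulus $p^{\alpha}$ in place of $p$. Put $N=p^{\alpha-1}(p-1)$. For $(i,p)=1$, Euler's theorem gives $\tfrac1i\equiv i^{N-1}\pmod{p^{\alpha}}$, and since $N-1\ge\alpha$ the terms with $p\mid i$ satisfy $i^{N-1}\equiv0\pmod{p^{\alpha}}$; applying the power-sum identity \eqref{1} with $r=N-1$, $n=am$, and then interchanging the two summations gives
\begin{equation*}
S(p^{\alpha};p)\equiv\frac1N\sum_{k=0}^{N-1}\binom{N}{k}B_{k}\,m^{N-k}\sum_{a=1\atop{(a,p)=1}}^{p^{\alpha}-1}a^{N-k-2}\pmod{p^{\alpha}}.
\end{equation*}
Using the decomposition $(\mathbb{Z}/p^{\alpha}\mathbb{Z})^{\ast}\cong\mu_{p-1}\times(1+p\mathbb{Z}/p^{\alpha}\mathbb{Z})$, one checks that for $s\ge 0$ the inner power sum is $\equiv-p^{\alpha-1}\pmod{p^{\alpha}}$ when $(p-1)\mid s$ and $\equiv0\pmod{p^{\alpha}}$ (with room to spare) otherwise. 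Hence only $k\equiv-2\pmod{p-1}$, say $k=(p-1)j-2$ with $1\le j\le p^{\alpha-1}$, survive, and the right-hand side reduces to
\begin{equation*}
-\frac{1}{p-1}\sum_{j=1}^{p^{\alpha-1}}\binom{N}{(p-1)j-2}B_{(p-1)j-2}\,m^{N-(p-1)j+2}\pmod{p^{\alpha}}.
\end{equation*}

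To extract this modulo $p^{\alpha}$ I would use three facts. First, $v_{p}\!\big(\binom{N}{k}\big)=(\alpha-1)-v_{p}(k)$ for $1\le k\le N-1$: from $\binom{N}{k}=\tfrac{N}{k}\binom{N-1}{k-1}$ together with the fact that in base $p$ the digits of $N-1$ are $p-2$ in the leading place and $p-1$ in all lower places, Lucas' theorem shows $\binom{N-1}{k-1}$ is a $p$-adic unit for each such $k$. Second, Kummer's congruence $B_{k}/k\equiv B_{p-3}/(p-3)\pmod p$ for $k\equiv p-3\pmod{p-1}$, together with the standard integrality $B_{k}/k\in\mathbb{Z}_{p}$ in this range (this also handles the irregular case $p\mid B_{p-3}$, where both sides of the lemma are then $\equiv0\pmod{p^{\alpha}}$). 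Third, $m^{N-k}\equiv m^{2}\pmod p$. Combining these, each surviving summand has $p$-adic valuation exactly $\alpha-1$ (when $p\nmid B_{p-3}$), and dividing out $p^{\alpha-1}$ collapses the whole expression to $-\frac{B_{p-3}}{p-3}\,m^{2}\sum_{j=1}^{p^{\alpha-1}}\binom{N-1}{(p-1)j-3}\pmod p$.

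It then remains to prove that $\sum_{j=1}^{p^{\alpha-1}}\binom{N-1}{(p-1)j-3}\equiv3\pmod p$, which I would do with the roots-of-unity filter modulo $p$: as the $(p-1)$st roots of unity mod $p$ are precisely $\mathbb{F}_{p}^{\ast}$, this sum equals $-\sum_{a\in\mathbb{F}_{p}^{\ast}}a^{3}(1+a)^{N-1}\pmod p$, and since $(1+a)^{N-1}\equiv(1+a)^{-1}\pmod p$ for $a\ne-1$ (and $(1+a)^{N-1}\equiv0$ for $a=-1$) this is $-\sum_{a\ne-1}a^{3}(1+a)^{-1}$, which no longer depends on $\alpha$; the substitution $b=1+a$ evaluates it to $3$, matching the case $\alpha=1$ where the sum is the single term $\binom{p-2}{p-4}=\binom{p-2}{2}\equiv3$. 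Using $-3/(p-3)\equiv1\pmod p$ one concludes $S(p^{\alpha};p)\equiv p^{\alpha-1}m^{2}B_{p-3}\pmod{p^{\alpha}}$. The main obstacle is the $p$-adic bookkeeping in the previous paragraph: because the prefactor $1/N$ has valuation $-(\alpha-1)$, one must control the power sums $\sum_{(a,p)=1}a^{s}$ and the binomials $\binom{N}{k}$ to precision beyond what Lemma~2 supplies, and one must verify that \emph{all} $p^{\alpha-1}$ surviving indices $k\equiv-2\pmod{p-1}$ — not merely one, as in the prime case — conspire, via the $\alpha$-independence above, into the single term $p^{\alpha-1}m^{2}B_{p-3}$; the degenerate prime $p=3$ (where $p-3=0$ and the $B_{k}$ are not $p$-integral) needs a small separate argument.
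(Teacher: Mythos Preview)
Your approach is genuinely different from the paper's, and the gap you flag as ``the main obstacle'' is real and, as written, not closed. The specific problem is the claim that for $(p-1)\nmid s$ the power sum $\sum_{(a,p)=1}^{p^{\alpha}-1}a^{s}$ vanishes modulo $p^{\alpha}$ ``with room to spare''. For odd $s$ this is true (Lemma~2 gives $p^{2\alpha-1}$ or $p^{2\alpha}$), but for \emph{even} $s$ with $(p-1)\nmid s$ the valuation is exactly~$\alpha$ in general: already for $s=2$, $\alpha=2$ one has $\sum_{(a,p)=1}^{p^{2}-1}a^{2}=\tfrac{(p^{2}-1)p^{2}(2p^{2}-1)}{6}-p^{2}\cdot\tfrac{(p-1)p(2p-1)}{6}$ of $p$-valuation~$2$. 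Since the prefactor $1/N$ has valuation $-(\alpha-1)$, the single ``non-surviving'' term $k=0$ already contributes $\tfrac{1}{N}m^{N}P_{N-2}$ of valuation~$1$, not~$\alpha$; so for $\alpha\ge 2$ you cannot discard the indices $k\not\equiv p-3\pmod{p-1}$ without proving a further cancellation among them to precision $p^{\alpha}$, and Lemma~2 does not supply this. The same precision shortfall recurs among your surviving terms whenever $p\mid k$, since then $v_{p}\binom{N}{k}<\alpha-1$ while you only know $P_{N-k-2}\equiv -p^{\alpha-1}\pmod{p^{\alpha}}$.

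The paper sidesteps all of this with a short recursion. Writing $a=s+p^{\alpha-1}t$ with $1\le s\le p^{\alpha-1}-1$, $(s,p)=1$, $0\le t\le p-1$, expanding $(s+p^{\alpha-1}t)^{-2}\equiv s^{-2}(1-2p^{\alpha-1}t/s)$ and splitting the inner harmonic sum at $i=sm$, one checks with only the crude bounds of Lemma~2 that every correction term vanishes modulo $p^{\alpha}$, yielding $S(p^{\alpha};p)\equiv p\,S(p^{\alpha-1};p)\pmod{p^{\alpha}}$ and hence $\equiv p^{\alpha-1}S(p;p)$ by iteration, after which Lemma~3 finishes. No $1/N$ denominator, no Kummer congruence, no binomial valuations, no roots-of-unity filter, and no special pleading for $p=3$ are needed.
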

\begin{proof}
Let $a=s+p^{\alpha-1}t,~1\leq s\leq p^{\alpha-1}-1,~0\leq t\leq p-1,~(s,p)=1$, then
 \begin{eqnarray}
\nonumber S(p^{\alpha};p)&=&\sum_{t=0}^{p-1}\sum_{s=1\atop{(s,p)=1}}^{p^{\alpha-1}-1}\frac{1}{(s+p^{\alpha-1}t)^{2}}\sum\limits_{i=1\atop{(i,p)=1}}^{(s+p^{\alpha-1}t)m-1}\frac{1}{i}\\
\nonumber&\equiv& \sum_{t=0}^{p-1}\sum_{s=1\atop{(s,p)=1}}^{p^{\alpha-1}-1}\frac{1}{s^{2}}(1-\frac{2p^{\alpha-1}t}{s})\sum\limits_{i=1\atop{(i,p)=1}}^{sm-1}\frac{1}{i}\\
\nonumber&+&
\sum_{t=0}^{p-1}\sum_{s=1\atop{(s,p)=1}}^{p^{\alpha-1}-1}\frac{1}{s^{2}}(1-\frac{2p^{\alpha-1}t}{s})\sum\limits_{i=sm\atop{(i,p)=1}}^{sm+p^{\alpha-1}tm-1}\frac{1}{i}\pmod{p^{\alpha}}.
  \end{eqnarray}
It is easy to see that
 \begin{equation*}
2p^{\alpha-1}\sum_{t=0}^{p-1}t\sum_{s=1\atop{(s,p)=1}}^{p^{\alpha-1}-1}\frac{1}{s^{3}}\sum\limits_{i=1\atop{(i,p)=1}}^{sm-1}\frac{1}{i}\equiv 0\pmod{p^{\alpha}}.
  \end{equation*}
By Lemma 2, we have
 \begin{equation*}
\sum_{t=0}^{p-1}\sum_{s=1\atop{(s,p)=1}}^{p^{\alpha-1}-1}\frac{1}{s^{2}}(1-\frac{2p^{\alpha-1}t}{s})\sum\limits_{i=sm\atop{(i,p)=1}}^{sm+p^{\alpha-1}tm-1}\frac{1}{i}\equiv 0\pmod{p^{\alpha}}.
  \end{equation*}
  Therefore
  \begin{equation*}
S(p^{\alpha};p)\equiv \sum_{t=0}^{p-1}\sum_{s=1\atop{(s,p)=1}}^{p^{\alpha-1}-1}\frac{1}{s^{2}}\sum\limits_{i=1\atop{(i,p)=1}}^{sm-1}\frac{1}{i}\equiv pS(p^{\alpha-1};p)\equiv \cdots \equiv p^{\alpha-1}S(p;p)\pmod{p^{\alpha}}.
  \end{equation*}
  By Lemma 3,  we complete the proof of Lemma 4.
  \end{proof}
\begin{lemma}
Let $p,~q$ be distinct odd primes, $m$ positive integer coprime to
$p$ and $\alpha\geq 2,~\beta\geq 0$ integers, then
\begin{eqnarray}
\nonumber S(p^{\alpha}q^{\beta}£»p)\equiv p^{\alpha-1} q^{\beta}m^{2}B_{p-3}\pmod{p^{\alpha}}.
  \end{eqnarray}
\end{lemma}
\begin{proof}
Let $a=s+p^{\alpha}t,~1\leq s\leq p^{\alpha}-1,~0\leq t\leq q^{\beta}-1,~(s,p)=1$, then
 \begin{eqnarray}
\nonumber S(p^{\alpha}q^{\beta};p)&=&\sum_{t=0}^{q^{\beta}-1}\sum_{s=1\atop{(s,p)=1}}^{p^{\alpha}-1}\frac{1}{(s+p^{\alpha}t)^{2}}\sum\limits_{i=1\atop{(i,p)=1}}^{(s+p^{\alpha}t)m-1}\frac{1}{i}\\
\nonumber &\equiv& \sum_{t=0}^{q^{\beta}-1}\sum_{s=1\atop{(s,p)=1}}^{p^{\alpha}-1}\frac{1}{s^{2}}\sum\limits_{i=1\atop{(i,p)=1}}^{sm-1}\frac{1}{i}\\
\nonumber &+&\sum_{t=0}^{q^{\beta}-1}\sum_{s=1\atop{(s,p)=1}}^{p^{\alpha}-1}\frac{1}{s^{2}}\sum\limits_{i=sm\atop{(i,p)=1}}^{sm+p^{\alpha}tm-1}\frac{1}{i}\pmod{p^{\alpha}}.
  \end{eqnarray}
By Lemma 2, we have
 \begin{equation*}
\sum\limits_{i=sm\atop{(i,p)=1}}^{sm+p^{\alpha}tm-1}\frac{1}{i}\equiv
0 \pmod{p^{\alpha}}.
  \end{equation*}
  Therefore
  \begin{equation*}
S(p^{\alpha}q^{\beta};p)\equiv q^{\beta}S(p^{\alpha})\pmod{p^{\alpha}}.
  \end{equation*}
  By Lemma 4,  we complete the proof of Lemma 5.
\end{proof}
\begin{lemma}[\cite{S}]
 Let $p$ be odd prime, $m\in{Z}^{+},(m,p)=1$, $[x]$ denote the largest integer less than or equal to $x$, then
\begin{equation*}
  \sum_{a=1}^{p-1}\frac{1}{a^k}\Big[\frac{am}{p}\Big]\equiv\left\{\begin{array}
   {l@{,\quad}l}
   -\frac{m+1}{2}\pmod{p}&k=0,\\
   0\pmod{p}&1\leq k \leq p-2,k\text{~is ~even}, \\
   \frac{m^k-m^p}{k}B_{p-k}\pmod{p}&1\leq k \leq p-2,k\text{~is ~odd}. \\
   \end{array}\right.
\end{equation*}
\end{lemma}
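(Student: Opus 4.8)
The plan is to reduce everything to power sums by means of the identity $\lfloor am/p\rfloor=(am-\langle am\rangle_{p})/p$, where $\langle x\rangle_{p}$ denotes the least nonnegative residue of $x$ modulo $p$ (so $\langle am\rangle_{p}\in\{1,\dots,p-1\}$ since $p\nmid am$). The two easy cases come first. For $k=0$ the identity gives $\sum_{a=1}^{p-1}\lfloor am/p\rfloor=\tfrac{(m-1)(p-1)}{2}$, because $\sum_{a}a=\sum_{a}\langle am\rangle_{p}=\tfrac{p(p-1)}{2}$. For even $k$ with $1\le k\le p-2$ I would apply the substitution $a\mapsto p-a$: then $(p-a)^{-k}\equiv a^{-k}\pmod p$ and $\lfloor(p-a)m/p\rfloor=m-1-\lfloor am/p\rfloor$ (since $am/p\notin\ZZ$), so adding the two expressions for the sum yields $2\sum_{a}a^{-k}\lfloor am/p\rfloor\equiv(m-1)\sum_{a=1}^{p-1}a^{-k}\pmod p$, which vanishes by Lemma~2 since $p$ is odd.

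The substance is the odd case; assume $3\le k\le p-2$ (the value $k=1$ is exceptional and should be handled separately, via von Staudt--Clausen and the Fermat quotient). Write $T(m)=\sum_{a=1}^{p-1}a^{-k}\lfloor am/p\rfloor$ and $H_{s}=\sum_{a=1}^{p-1}a^{-s}$. Since $\sum_{a}a^{1-k}\equiv0\pmod p$ by Lemma~2, both $T(m)$ and the claimed right-hand side are unchanged mod $p$ under $m\mapsto m+p$, so I may assume $1\le m\le p-1$. Multiplying the definition of $T(m)$ by $p$ turns the floor into residues,
\begin{equation*}
pT(m)=m\,H_{k-1}-\sum_{a=1}^{p-1}\frac{\langle am\rangle_{p}}{a^{k}},
\end{equation*}
and it remains to evaluate the right side modulo $p^{2}$. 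The key step is to reindex the twisted sum by $b=\langle am\rangle_{p}$, whose inverse correspondence is $a=\langle bm^{\ast}\rangle_{p}=bm^{\ast}-p\lfloor bm^{\ast}/p\rfloor$ with $m^{\ast}=\langle m^{-1}\rangle_{p}$; expanding $a^{-k}\equiv(bm^{\ast})^{-k}+kp\,(bm^{\ast})^{-k-1}\lfloor bm^{\ast}/p\rfloor\pmod{p^{2}}$ and summing produces
\begin{equation*}
\sum_{a=1}^{p-1}\frac{\langle am\rangle_{p}}{a^{k}}\equiv\frac{H_{k-1}}{(m^{\ast})^{k}}+\frac{kp\,T(m^{\ast})}{(m^{\ast})^{k+1}}\pmod{p^{2}},
\end{equation*}
so that $T$ reappears on the right with argument $m^{\ast}$; this self-reference is what makes the calculation close.

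To finish I need $H_{k-1}$ one power of $p$ beyond Lemma~2: since $k-1$ is even with $2\le k-1\le p-3$, Faulhaber's formula for $\sum_{a=1}^{n-1}a^{r}$ combined with Kummer's congruence gives $H_{k-1}\equiv\tfrac{k-1}{k}\,pB_{p-k}\pmod{p^{2}}$. Substituting this and the last display into $pT(m)=m H_{k-1}-\sum_{a}\langle am\rangle_{p}a^{-k}$, dividing by $p$, and simplifying with $mm^{\ast}\equiv1\pmod p$ (hence $(m^{\ast})^{-j}\equiv m^{j}$) yields the linear relation
\begin{equation*}
T(m)\equiv\tfrac{k-1}{k}B_{p-k}\,(m-m^{k})-k\,m^{k+1}T(m^{\ast})\pmod p.
\end{equation*}
Applying the same relation with $m$ and $m^{\ast}$ interchanged (using $(m^{\ast})^{\ast}=m$) and eliminating $T(m^{\ast})$, the coefficient of $T(m)$ becomes $k^{2}(mm^{\ast})^{k+1}\equiv k^{2}\pmod p$, and everything collapses to $(1-k^{2})T(m)\equiv(k+1)\tfrac{k-1}{k}B_{p-k}(m-m^{k})\pmod p$; dividing out $1-k^{2}$ and using $m^{p}\equiv m\pmod p$ together with the $p$-integrality of $k^{-1}B_{p-k}$ gives $T(m)\equiv\tfrac{m^{k}-m}{k}B_{p-k}\equiv\tfrac{m^{k}-m^{p}}{k}B_{p-k}\pmod p$, as claimed.

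The hard part is entirely the odd case, and within it two points: first, recognising that the twisted sum $\sum_{a}\langle am\rangle_{p}a^{-k}$ has no direct closed form modulo $p^{2}$ but folds back onto $T(m^{\ast})$ under $a\mapsto\langle am\rangle_{p}$, so that a two-term recursion can be set up and solved; and second, sharpening Lemma~2 to the modulus $p^{2}$, which is exactly where $B_{p-k}$ enters and where a Kummer-type congruence is needed to identify its residue modulo $p$. Everything else reduces to short symmetry arguments.
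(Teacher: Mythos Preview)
The paper does not prove this lemma: it is quoted from Porubsk\'y's 1983 paper (the reference~\cite{S}) and used as a black box, with only the odd case $k=3$ ever invoked, to obtain the congruence for $T(p;p)$ immediately following the statement. There is therefore no proof in the paper to compare your argument against.

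That said, your argument is sound. The two easy cases are handled correctly; in the odd case the reindexing $a\mapsto\langle am\rangle_{p}$ that folds $\sum_{a}\langle am\rangle_{p}\,a^{-k}$ back onto $T(m^{\ast})$ is a clean device, the refinement $H_{k-1}\equiv\tfrac{k-1}{k}\,pB_{p-k}\pmod{p^{2}}$ does follow from Faulhaber applied with exponent $\varphi(p^{2})-(k-1)$ together with a Kummer congruence, and the two-step elimination of $T(m^{\ast})$ succeeds because $1-k^{2}$ is a unit modulo $p$ for $3\le k\le p-2$. Two small remarks. First, your exact value $\tfrac{(m-1)(p-1)}{2}$ for $k=0$ reduces modulo $p$ to $-\tfrac{m-1}{2}$, not to the $-\tfrac{m+1}{2}$ printed in the statement; your value is the correct one (e.g.\ $p=5$, $m=2$ gives sum $2$), so this appears to be a misprint in the paper. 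Second, you are right to set $k=1$ aside, since then $H_{0}=p-1$ spoils the main computation, while the Fermat-quotient identification you indicate handles that case directly: both sides of the claimed congruence reduce to $q_{p}(m)=(m^{p}-m)/p$ modulo $p$.
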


Meanwhile, it is easy to see that
 \begin{eqnarray}\label{3}
T(p;p)\equiv\frac{1}{m}\sum_{a=1}^{p-1}\frac{1}{a^3}\Big[\frac{am}{p}\Big]\equiv\frac{m^3-m}{3m}B_{p-3}\pmod{p}.
  \end{eqnarray}

\begin{lemma}
 Let $p$ be odd prime, $m$ positive integer coprime to $p$ and $\alpha\geq 2$ integer, then
\begin{eqnarray}
\nonumber T(p^{\alpha};p)\equiv \frac{m^{3}-m}{3m}p^{\alpha-1} B_{p-3}\pmod{p^{\alpha}}.
  \end{eqnarray}
\end{lemma}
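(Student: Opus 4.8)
The plan is to establish the one-step recursion $T(p^{r};p)\equiv p\,T(p^{r-1};p)\pmod{p^{r}}$ for every $r\geq 2$, in exact parallel with the proof of Lemma 4, and then iterate it down to the base case $r=1$ supplied by \eqref{3}. To prove the recursion I would write $a=s+p^{r-1}t$ with $1\leq s\leq p^{r-1}-1$, $0\leq t\leq p-1$, $(s,p)=1$. The key point, which uses $r\geq 2$, is that $(s+p^{r-1}t)m\equiv sm\pmod p$, so in the inner sum the condition $i\equiv am\pmod p$ collapses to $i\equiv sm\pmod p$, uniformly in $t$; this uniformity is exactly what will produce the factor $p$. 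Expanding $\frac{1}{(s+p^{r-1}t)^{2}}\equiv\frac{1}{s^{2}}\bigl(1-\tfrac{2p^{r-1}t}{s}\bigr)\pmod{p^{r}}$ and splitting the inner sum at $i=sm$, one isolates the main term $\sum_{t}\sum_{s}\frac{1}{s^{2}}\sum_{i\equiv sm(p),\,i<sm}\frac{1}{i}=p\,T(p^{r-1};p)$ together with error terms; the term $-2p^{r-1}\sum_{t}t\sum_{s}\frac{1}{s^{3}}\sum_{i\equiv sm(p),\,i<sm}\frac{1}{i}$ already carries $p^{r-1}$ and dies because $\sum_{t=0}^{p-1}t=\tfrac{p(p-1)}{2}\equiv 0\pmod p$.

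The crux is the ``overhang'' term $\sum_{t}\sum_{s}\frac{1}{s^{2}}\bigl(1-\tfrac{2p^{r-1}t}{s}\bigr)\sum_{\ell=0}^{p^{r-2}tm-1}\frac{1}{sm+p\ell}$, which must vanish modulo $p^{r}$; this is the analogue of the step dispatched in the proof of Lemma 4 with ``By Lemma 2'', and I expect it to be the main obstacle. It is slightly more delicate here, because the inner sum now runs over an arithmetic progression modulo $p$ rather than over all residues prime to $p$: a complete cycle has length $p^{r}$ instead of $p^{r-1}$, so after reducing the progression sum modulo $p^{r}$ one is left with complete residue systems (which cancel by Lemma 2 together with the pairing $i\leftrightarrow p^{r}-i$) plus a bounded incomplete tail that must be estimated by hand --- expanding $\frac{1}{sm+p\ell}$ as a $p$-adic geometric series, evaluating $\sum_{\ell}\ell^{j}$ via \eqref{1}, and checking that every surviving contribution picks up enough powers of $p$ (from the factor $p^{r-2}$ in the length of the range, from $\sum_{t=0}^{p-1}t\equiv 0\pmod p$, and from the Lemma 2 divisibilities of $\sum_{(s,p)=1}s^{-2}$ and $\sum_{(s,p)=1}s^{-3}$ over $[1,p^{r-1})$).

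Granting the recursion, iteration yields $T(p^{\alpha};p)\equiv p\,T(p^{\alpha-1};p)\equiv p^{2}T(p^{\alpha-2};p)\equiv\cdots\equiv p^{\alpha-1}T(p;p)\pmod{p^{\alpha}}$, and substituting \eqref{3}, namely $T(p;p)\equiv\frac{m^{3}-m}{3m}B_{p-3}\pmod p$, gives $T(p^{\alpha};p)\equiv\frac{m^{3}-m}{3m}p^{\alpha-1}B_{p-3}\pmod{p^{\alpha}}$, which is exactly the assertion of the lemma.
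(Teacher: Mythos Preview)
Your plan is correct and mirrors the paper's proof essentially step for step: the same substitution $a=s+p^{\alpha-1}t$, the same observation that the congruence condition on $i$ becomes $i\equiv sm\pmod p$ independent of $t$, the same first-order expansion of $(s+p^{\alpha-1}t)^{-2}$, the same split of the inner sum at $i=sm$, and the same iteration down to \eqref{3}. The paper handles the overhang term exactly by the method you describe second---writing $\sum_{\ell=0}^{p^{\alpha-2}tm-1}\frac{1}{sm+p\ell}$ as a truncated $p$-adic geometric series, using \eqref{1} to see $\sum_{\ell}\ell^{k}\equiv 0\pmod{p^{\alpha-2}}$, and then invoking Lemma~2---so your initial detour through ``complete residue systems'' and the pairing $i\leftrightarrow p^{r}-i$ is unnecessary; the direct geometric-series computation already delivers enough powers of $p$ without splitting into complete periods plus a tail.
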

\begin{proof}
Let $a=s+p^{\alpha-1}t,~1\leq s\leq p^{\alpha-1}-1,~0\leq t\leq p-1,~(s,p)=1$, then
  \begin{eqnarray}
\nonumber T(p^{\alpha};p)&=&\sum_{a=1\atop{(a,p)=1}}^{p^{\alpha}-1}\frac{1}{a^{2}}\sum\limits_{i=1\atop{i\equiv am\pmod{p}}}^{am-1}\frac{1}{i}
=\sum_{t=0}^{p-1}\sum_{s=1\atop{(s,p)=1}}^{p^{\alpha-1}-1}\frac{1}{(s+p^{\alpha-1}t)^{2}}\sum\limits_{i=1\atop{i\equiv (s+p^{\alpha-1}t)m\pmod{p}}}^{(s+p^{\alpha-1}t)m-1}\frac{1}{i}\\
\nonumber &\equiv &\sum_{t=0}^{p-1}\sum_{s=1\atop{(s,p)=1}}^{p^{\alpha-1}-1}\frac{1}{s^{2}}(1-\frac{2p^{\alpha-1}t}{s})(\sum\limits_{i=1\atop{i\equiv sm\pmod{p}}}^{sm-1}\frac{1}{i} +\sum\limits_{i=sm\atop{i\equiv sm\pmod{p}}}^{sm+p^{\alpha-1}tm-1}\frac{1}{i})\pmod{p^{\alpha}}.
\end{eqnarray}
It is easy to see that
 \begin{equation*}
2p^{\alpha-1}\sum_{t=0}^{p-1}t\sum_{s=1\atop{(s,p)=1}}^{p^{\alpha-1}-1}\frac{1}{s^{3}}\sum\limits_{i=1\atop{i\equiv sm\pmod{p}}}^{sm-1}\frac{1}{i}\equiv 0\pmod{p^{\alpha}}.
  \end{equation*}
Since
 \begin{eqnarray}
\nonumber\sum\limits_{i=sm\atop{i\equiv sm\pmod{p}}}^{sm+p^{\alpha-1}tm-1}\frac{1}{i}&\equiv& \sum\limits_{j=0}^{p^{\alpha-2}tm-1}\frac{1}{sm+jp}\equiv \sum\limits_{j=0}^{p^{\alpha-2}tm-1}\frac{1}{sm(1+\frac{jp}{sm})}\equiv \sum\limits_{j=0}^{p^{\alpha-2}tm-1}\frac{1}{sm}\sum\limits_{k=0}^{\alpha-1}(-\frac{jp}{sm})^{k}\\
\nonumber&\equiv& \sum\limits_{k=0}^{\alpha-1}\frac{(-p)^{k}}{(sm)^{k+1}}\sum\limits_{j=0}^{p^{\alpha-2}tm-1}j^{k},
  \end{eqnarray}
  by (\ref{1}), we have $\sum\limits_{j=0}^{p^{\alpha-2}tm-1}j^{k}\equiv 0\pmod{p^{\alpha-2}}$. Together with Lemma 2,
  we have
 \begin{equation*}
  \sum_{t=0}^{p-1}\sum_{s=1\atop{(s,p)=1}}^{p^{\alpha-1}-1}\frac{1}{s^{2}}(1-\frac{2p^{\alpha-1}t}{s})\sum\limits_{i=sm\atop{i\equiv sm\pmod{p}}}^{sm+p^{\alpha-1}tm-1}\frac{1}{i}\equiv 0\pmod{p^{\alpha}}.
  \end{equation*}
Hence
 \begin{eqnarray}
\nonumber T(p^{\alpha};p)\equiv \sum_{t=0}^{p-1}\sum_{s=1\atop{(s,p)=1}}^{p^{\alpha-1}-1}\frac{1}{s^{2}}\sum\limits_{i=1\atop{i\equiv sm\pmod{p}}}^{sm-1}\frac{1}{i}
\equiv p T(p^{\alpha-1};p)\equiv \cdots\equiv p^{\alpha-1} T(p;p)\pmod{p^{\alpha}},
\end{eqnarray}
 By (\ref{3}),  we complete the proof of Lemma 7.
\end{proof}
\begin{lemma}
Let $p,~q$ be distinct odd primes, $m$ positive integer coprime to
$p$ and $\alpha\geq 2,~\beta\geq 0$ integers, then
\begin{eqnarray}
\nonumber T(p^{\alpha}q^{\beta};p)\equiv \frac{m^{3}-m}{3m}p^{\alpha-1}q^{\beta} B_{p-3}\pmod{p^{\alpha}}.
  \end{eqnarray}
\end{lemma}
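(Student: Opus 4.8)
The plan is to rerun the argument of Lemma 5 (which reduced $S(p^{\alpha}q^{\beta};p)$ to $q^{\beta}S(p^{\alpha};p)$) in this setting, establishing
\begin{equation*}
T(p^{\alpha}q^{\beta};p)\equiv q^{\beta}T(p^{\alpha};p)\pmod{p^{\alpha}},
\end{equation*}
and then quoting Lemma 7. For $\beta=0$ this is exactly Lemma 7, so assume $\beta\ge1$.

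First I would write each $a$ with $1\le a\le p^{\alpha}q^{\beta}-1$, $(a,p)=1$, uniquely as $a=s+p^{\alpha}t$ with $1\le s\le p^{\alpha}-1$, $(s,p)=1$, $0\le t\le q^{\beta}-1$; both index sets have cardinality $(p^{\alpha}-p^{\alpha-1})q^{\beta}$ and the map is plainly injective, so it is a bijection. Working modulo $p^{\alpha}$, where every $i$ appearing in the inner sum is a unit (as $i\equiv sm\not\equiv0\pmod p$), we have $\frac{1}{a^{2}}\equiv\frac{1}{s^{2}}$ since $s+p^{\alpha}t\equiv s$; also $am=sm+p^{\alpha}tm\equiv sm\pmod p$, so the condition $i\equiv am\pmod p$ becomes $i\equiv sm\pmod p$. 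Splitting the range $1\le i\le am-1$ at $i=sm$ then gives
\begin{equation*}
T(p^{\alpha}q^{\beta};p)\equiv\sum_{t=0}^{q^{\beta}-1}\sum_{s=1\atop{(s,p)=1}}^{p^{\alpha}-1}\frac{1}{s^{2}}\left(\sum\limits_{i=1\atop{i\equiv sm\pmod{p}}}^{sm-1}\frac{1}{i}+\sum\limits_{i=sm\atop{i\equiv sm\pmod{p}}}^{sm+p^{\alpha}tm-1}\frac{1}{i}\right)\pmod{p^{\alpha}}.
\end{equation*}
The first inner sum does not involve $t$, so summing it over the $q^{\beta}$ values of $t$ yields precisely $q^{\beta}T(p^{\alpha};p)$.

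It remains to show the tail double sum is $\equiv0\pmod{p^{\alpha}}$, and here I would reuse the bookkeeping from the proof of Lemma 7. Writing $\sum_{i=sm,\,i\equiv sm(p)}^{sm+p^{\alpha}tm-1}\frac{1}{i}=\sum_{j=0}^{p^{\alpha-1}tm-1}\frac{1}{sm+jp}$ and expanding $\frac{1}{sm+jp}\equiv\frac{1}{sm}\sum_{k=0}^{\alpha-1}\left(-\frac{jp}{sm}\right)^{k}\pmod{p^{\alpha}}$, the tail becomes $\sum_{k=0}^{\alpha-1}\frac{(-p)^{k}}{(sm)^{k+1}}\sum_{j=0}^{p^{\alpha-1}tm-1}j^{k}$; applying the power-sum identity~(\ref{1}) to the $j$-sum and then summing the whole expression against $\frac{1}{s^{2}}$ over $s$ and over $t$, each term picks up a power of $p$ (from $(-p)^{k}$ if $k\ge1$, from the factor $p^{\alpha-1}$ if $k=0$) times a sum $\sum_{s=1,(s,p)=1}^{p^{\alpha}-1}s^{-(k+3)}$ which Lemma 2 forces to be divisible by a further large power of $p$, so the product is $\equiv0\pmod{p^{\alpha}}$. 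Granting this, $T(p^{\alpha}q^{\beta};p)\equiv q^{\beta}T(p^{\alpha};p)\pmod{p^{\alpha}}$, and Lemma 7 gives $T(p^{\alpha};p)\equiv\frac{m^{3}-m}{3m}p^{\alpha-1}B_{p-3}\pmod{p^{\alpha}}$, whence the claim. The step I expect to be the actual obstacle is exactly this tail estimate: one must balance the $p$-adic valuation of the power sum $\sum_{j}j^{k}$---where, by von Staudt--Clausen, the denominator of $B_{l}$ with $(p-1)\mid l$ costs one power of $p$---against the divisibility Lemma 2 supplies for $\sum_{s}s^{-(k+3)}$, and check it always reaches $p^{\alpha}$ for every $1\le k\le\alpha-1$, including the borderline small-$\alpha$ cases. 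Since the proof of Lemma 7 already performs precisely this computation (with $p^{\alpha-2}tm$ there in place of $p^{\alpha-1}tm$, which only makes the present version slacker), it transcribes with no new difficulty.
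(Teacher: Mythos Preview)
Your argument is correct and is essentially the paper's own proof: the same substitution $a=s+p^{\alpha}t$, the same splitting into a main term $q^{\beta}T(p^{\alpha};p)$ plus a tail, and the same appeal to the expansion of $\frac{1}{sm+jp}$ together with Lemma~2 (transcribed from the proof of Lemma~7, exactly as you say) to kill the tail modulo $p^{\alpha}$, followed by Lemma~7. Your observation that the tail here is strictly easier than in Lemma~7 (because $p^{\alpha-1}tm$ replaces $p^{\alpha-2}tm$ and the $s$-sum now ranges to $p^{\alpha}-1$ rather than $p^{\alpha-1}-1$) is also the reason the paper can dispose of it in one line.
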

\begin{proof}
Let $a=s+p^{\alpha}t,~1\leq s\leq p^{\alpha}-1,~0\leq t\leq q^{\beta-1}-1,~(s,p)=1$, then
 \begin{eqnarray}
\nonumber T(p^{\alpha}q^{\beta};p)&=&\sum_{t=0}^{q^{\beta}-1}\sum_{s=1\atop{(s,p)=1}}^{p^{\alpha}-1}\frac{1}{(s+p^{\alpha}t)^{2}}\sum\limits_{i=1\atop{i\equiv (s+p^{\alpha}t)m\pmod{p}}}^{(s+p^{\alpha}t)m-1}\frac{1}{i}\\
\nonumber &\equiv& \sum_{t=0}^{q^{\beta}-1}\sum_{s=1\atop{(s,p)=1}}^{p^{\alpha}-1}\frac{1}{s^{2}}\sum\limits_{i=1\atop{i\equiv sm\pmod{p}}}^{sm-1}\frac{1}{i}
+\sum_{t=0}^{q^{\beta}-1}\sum_{s=1\atop{(s,p)=1}}^{p^{\alpha}-1}\frac{1}{s^{2}}\sum\limits_{i=sm\atop{i\equiv sm\pmod{p}}}^{sm+p^{\alpha}tm-1}\frac{1}{i}\pmod{p^{\alpha}}.
  \end{eqnarray}
Since
 \begin{equation*}
\sum\limits_{i=sm\atop{i\equiv sm\pmod{p}}}^{sm+p^{\alpha}tm-1}\frac{1}{i}\equiv \sum\limits_{j=0}^{p^{\alpha-1}tm-1}\frac{1}{sm+jp}\pmod{p^{\alpha}},
  \end{equation*}
similar to Lemma 6, we can prove
\begin{eqnarray}
\nonumber \sum_{t=0}^{q^{\beta}-1}\sum_{s=1\atop{(s,p)=1}}^{p^{\alpha}-1}\frac{1}{s^{2}}\sum\limits_{i=sm\atop{i\equiv sm\pmod{p}}}^{sm+p^{\alpha}tm-1}\frac{1}{i}\equiv 0\pmod{p^{\alpha}}.
\end{eqnarray}

 Therefore
  \begin{equation*}
T(p^{\alpha}q^{\beta};p)\equiv \sum_{t=0}^{q^{\beta}-1}\sum_{s=1\atop{(s,p)=1}}^{p^{\alpha}-1}\frac{1}{s^{2}}\sum\limits_{i=1\atop{i\equiv sm\pmod{p}}}^{sm-1}\frac{1}{i}\equiv q^{\beta}T(p^{\alpha};p)\pmod{p^{\alpha}}.
  \end{equation*}
  By Lemma 7,  we complete the proof of Lemma 8.
\end{proof}

\section{Proofs of the Theorems.}
\begin{proof}[Proof of Theorem 1 \\]
By symmetry, It is easy to see that
 \begin{eqnarray}\label{18}
      Z(pq)=\sum\limits_{i+j+k=pq\atop{i,j,k\in\mathcal{P}_{pq}}}\frac{1}{ijk}
   =\sum\limits_{i+j+k=pq\atop{i,j,k\in\mathcal{P}_{p}}}\frac{1}{ijk}-3\sum_{a=1}^{p-1}\frac{1}{(p-a)q}
    \sum\limits_{i+j=aq\atop{(ij,p)=1}}\frac{1}{ij}+2\sum\limits_{a+b+c=p\atop{a,b,c\in\mathcal{P}_{p}}}\frac{1}{aqbqcq}.
  \end{eqnarray}
By Lemma 1,
 \begin{eqnarray}\label{19}
     \sum\limits_{i+j+k=pq\atop{i,j,k\in\mathcal{P}_{p}}}\frac{1}{ijk}
     +2\sum\limits_{a+b+c=p\atop{a,b,c\in\mathcal{P}_{p}}}\frac{1}{aqbqcq}\equiv(q+\frac{2}{q^{3}})Z(p)\equiv-2(q+\frac{2}{q^{3}})B_{p-3}\pmod{p}.
  \end{eqnarray}
Again by symmetry, the second sum in (\ref{18}) equals to
\begin{eqnarray}\label{20}
\nonumber &&-3\sum_{a=1}^{p-1}\frac{1}{(p-a)q}\frac{1}{aq}\sum\limits_{i+j=aq\atop{(ij,p)=1}}\frac{i+j}{ij}\\
    \nonumber &\equiv&\frac{6}{q^{2}}\sum_{a=1}^{p-1}\frac{1}{a^{2}}\sum\limits_{i=1\atop{(i,p)=(aq-i,p)=1}}^{aq}\frac{1}{i}\\
     \nonumber &\equiv&\frac{6}{q^{2}}\sum_{a=1}^{p-1}\frac{1}{a^{2}}\sum\limits_{i=1\atop{(i,p)=1}}^{aq-1}\frac{1}{i}
     -\frac{6}{q^{2}}\sum_{a=1}^{p-1}\frac{1}{a^{2}}\frac{1}{aq}[\frac{aq}{p}]\\
      &\equiv&\frac{6}{q^{2}}\sum_{a=1}^{p-1}\frac{1}{a^{2}}\sum\limits_{i=1\atop{(i,p)=1}}^{aq-1}\frac{1}{i}
     -\frac{6}{q^{3}}\sum_{a=1}^{p-1}\frac{1}{a^{3}}[\frac{aq}{p}]\pmod{p}.
  \end{eqnarray}
Applying Lemma 3 and Lemma 6 to (\ref{20}), we have
  \begin{eqnarray}\label{21}
-3\sum_{a=1}^{p-1}\frac{1}{(p-a)q}\sum\limits_{i+j=aq\atop{(ij,p)=1}}\frac{1}{ij}
    \equiv\frac{6}{q^{2}}q^{2}B_{p-3} -\frac{6}{q^{3}}\frac{q^{3}-q}{3}B_{p-3}
    \equiv 2(1+\frac{1}{q^{2}})B_{p-3}\pmod{p}.
  \end{eqnarray}
 Combining (\ref{18}), (\ref{19}) and (\ref{21}), we have
  \begin{eqnarray}
  \nonumber    Z(pq)\equiv-2(q+\frac{2}{q^{3}})B_{p-3}+2(1+\frac{1}{q^{2}})B_{p-3}\equiv 2(2-q)(1-\frac{1}{q^{3}})B_{p-3}\pmod{p}.
  \end{eqnarray}
 This completes the proof of Theorem 1.
\end{proof}

\textbf{Remark 1}~  When $n=pq$, $p,~q$ are distinct odd primes,
\begin{eqnarray}
    \nonumber Z(n)\equiv 2(2-q)(1-\frac{1}{q^{3}})B_{p-3}\equiv 6(1+\frac{3}{\phi(n)-2})(1+\frac{1}{(\phi(n)-1)^{3}})B_{\phi(n)-2}\pmod{p},
  \end{eqnarray}
where Euler function $\phi(n)=(p-1)(q-1)$, and we use Kummer congruence,
 \begin{eqnarray}
    \nonumber \frac{B_{\phi(n)-2}}{\phi(n)-2}\equiv\frac{B_{p-3}}{p-3}\pmod{p}.
  \end{eqnarray}
 Similarly,
 \begin{eqnarray}
    \nonumber Z(n)\equiv 6(1+\frac{3}{\phi(n)-2})(1+\frac{1}{(\phi(n)-1)^{3}})B_{\phi(n)-2}\pmod{q},
  \end{eqnarray}
  Therefore, by Chinese Remainder Theorem, we have
 \begin{eqnarray}
    \nonumber Z(n)\equiv 6(1+\frac{3}{\phi(n)-2})(1+\frac{1}{(\phi(n)-1)^{3}})B_{\phi(n)-2}\pmod{n}.
  \end{eqnarray}
\begin{proof}[Proof of Theorem 2 \\]
If $\alpha=1,~\beta=1$, Theorem 2 is Theorem 1. Without loss of
generality, suppose $\alpha\geq 2,~\beta\geq 1$, similar to the
proof of Theorem 1, we have
 \begin{eqnarray}\label{22}
  \nonumber    Z(p^{\alpha}q^{\beta})&=&\sum\limits_{i+j+k=p^{\alpha}q^{\beta}\atop{i,j,k\in\mathcal{P}_{pq}}}\frac{1}{ijk}\\
   \nonumber &=&\sum\limits_{i+j+k=p^{\alpha}q^{\beta}\atop{i,j,k\in\mathcal{P}_{p}}}\frac{1}{ijk}-3\sum_{a=1\atop{(a,p)=1}}^{p^{\alpha}q^{\beta-1}-1}\frac{1}{(p^{\alpha}q^{\beta}-a)q}
    \sum\limits_{i+j=aq\atop{(ij,p)=1}}\frac{1}{ij}+2\sum\limits_{a+b+c=p^{\alpha}q^{\beta-1}\atop{a,b,c\in\mathcal{P}_{p}}}\frac{1}{aqbqcq}\\
    \nonumber &\equiv & \sum\limits_{i+j+k=p^{\alpha}q^{\beta}\atop{i,j,k\in\mathcal{P}_{p}}}\frac{1}{ijk}
   +\frac{2}{q^{3}}\sum\limits_{a+b+c=p^{\alpha}q^{\beta-1}\atop{a,b,c\in\mathcal{P}_{p}}}\frac{1}{abc}
   +\frac{6}{q^{2}}\sum_{a=1\atop{(a,p)=1}}^{p^{\alpha}q^{\beta-1}-1}\frac{1}{a^{2}}\sum_{i=1\atop{(i,p)=1}}^{aq-1}\frac{1}{i}\\
   && -\frac{6}{q^{2}}\sum_{a=1\atop{(a,p)=1}}^{p^{\alpha}q^{\beta-1}-1}\frac{1}{a^{2}}\sum_{i=1\atop{i\equiv aq \pmod{p}}}^{aq-1}\frac{1}{i}\pmod{p^{\alpha}}.
  \end{eqnarray}
By Lemma 1, Lemma 5 and  Lemma 8,  (\ref{22}) is congruent to
 \begin{eqnarray}
    \nonumber &&(q^{\beta}+\frac{2q^{\beta-1}}{q^{3}})(-2p^{\alpha-1})B_{p-3}+\frac{6}{q^{2}}p^{\alpha-1}q^{\beta-1}q^{2}B_{p-3}
    -\frac{6}{q^{2}}\frac{q^{3}-q}{3q}p^{\alpha-1}q^{\beta-1}B_{p-3}\\
   \nonumber &\equiv& 2(2-q)(1-\frac{1}{q^{3}})p^{\alpha-1}q^{\beta-1}B_{p-3}\pmod{p^{\alpha}}.
  \end{eqnarray}
 This completes the proof of Theorem 2.
\end{proof}

\begin{proof}[Proof of Theorem 3 \\]
 By Theorem 2, if and only if one of the following cases is true, $Z(p^{\alpha}q^{\beta})\equiv 0 \pmod{p^{\alpha}q^{\beta}}$.
\begin{align}
&(a)\begin{cases}
 p\equiv 2 \pmod{q}\\
 q\equiv 2 \pmod{p}\\
  \end{cases}
 ~~~~~~ (b)\begin{cases}
 p \equiv 2 \pmod{q}\\
 q^{3} \equiv 1 \pmod{p}\\
  \end{cases}
\nonumber\\&  (c)\begin{cases}
 p^{3} \equiv 1 \pmod{q}\\
 q  \equiv 2 \pmod{p}\\
  \end{cases}
  ~or~  (d)\begin{cases}
 p^{3} &\equiv 1 \pmod{q}\\
 q^{3} &\equiv 1 \pmod{p}\\
  \end{cases}
 \nonumber
\end{align}

It is obvious that there are no primes $p, ~q$ satisfying Case ($a$). \\

 For Case ($b$). Let $p=2+aq$, with $a$ odd. If $a=1$, $q^{3}\equiv -8 \equiv 1\pmod{p}$,
 then $p=3,~q=1$, $q$ is not a prime. Hence  $a\geq 3$. Since  $q^{3} \equiv 1 \pmod{p}$,
 then $q\equiv 1 \pmod{p}$ or $q^{2}+q+1 \equiv 0 \pmod{p}$. It is obvious that there are no primes $p,~q$
 satisfy $p \equiv 2 \pmod{q}$ and $q\equiv 1 \pmod{p}$. If  $q^{2}+q+1 \equiv 0 \pmod{p}$,
 let $q^{2}+q+1=bp$,  then $q^{2}+q+1 =2b +baq$. Hence $q|2b-1$, there exists a positive integer $c$ such that
 $2b-1=cq$. Therefore, $bp=\frac{(cq+1)(2+aq)}{2}=\frac{ac}{2}q^{2}+\frac{a+2c}{2}q+1>q^{2}+q+1$,
 impossible!\\

 Similarly, we can show that Case ($c$) is impossible too.\\

For Case ($d$).  It is obvious that $p \equiv 1 \pmod{q}$ and $
q\equiv 1 \pmod{p}$ are not possible. Similar to Case ($b$), it
can be justified that if $p \equiv 1 \pmod{q}$ and $
q^{2}+q+1\equiv 0 \pmod{p}$, then $q^{2}+q+1$ must be a prime.
It is true if we exchange $p$ with $q$.\\

This completes the proof of Theorem 3.
\end{proof}

\textbf{Remark 2}~ By Theorem 3, we know if and only if prime
pairs $(p,~q)$ satisfy $q=p^{2}+p+1$ or $p=q^{2}+q+1$ or
$p|q^{2}+q+1$ and $q|p^{2}+p+1$, then
$Z(p^{\alpha}q^{\beta})\equiv 0 \pmod{p^{\alpha}q^{\beta}}$. There
exist prime pairs $(p,~q)$ such that $q=p^{2}+p+1$, for example
$(p,~q)=(3,~13),(5,~31),(17,~307),(41,~1723),(59,~3541),(71,~5113),(89,~8011)$,etc..\\

\textbf{Problem 1}~Are there infinitely many prime pairs $(p,~q)$ such that $q=p^{2}+p+1$? \\

In 2004, Chao \cite{C} proposed that: Find all pairs of positive
integers $a$ and $b$ such that $a$ divides $b^{2}+b+1$ and $b$
divides $a^{2}+a+1$. There recurrence is: $a(1)=a(2)=1$ and
$a(n+1)=\frac{1+a(n)+a^2(n)}{a(n-1)}$ for $n>2$, then
\begin{equation*}
a(n) = (\frac{4}{3} - \frac{2\sqrt{21}}{7})(\frac{5 + \sqrt{21}}{2})^{n}+ (\frac{4}{3} + \frac{2\sqrt{21}}{7})(\frac{5 - \sqrt{21}}{2})^{n} + \frac{1}{3}.
\end{equation*}
 In fact, the pairs $(a(n),~a(n+1))$, $n>0$, are all the
 solutions\cite{h}:

 $a(n)$ for $n=1,\cdots, 28$  are: 1, 1, 3, 13, 61, 291, 1393, 6673, 31971, 153181, 733933, 3516483, 16848481, 80725921, 386781123, 1853179693, 8879117341, 42542407011, 203832917713, 976622181553, 4679277990051, 22419767768701, 107419560853453, 514678036498563, 2465970621639361, 11815175071698241, \\56609904736851843, 271234348612560973.\\

 We find three prime pairs $(p,~q)$ such that $p|q^{2}+q+1$ and $q|p^{2}+p+1$, i.e.,
  $(p,~q)=(3,~13),(13,~61),(22419767768701, ~107419560853453)$. What is the next pair?. \\

\textbf{Problem 2}~Are there infinitely many prime pairs $(p,~q)$
such that $p|q^{2}+q+1$ and $q|p^{2}+p+1$?\\

  \textbf{Remark 3}~By the conjecture and Chinese Remainder Theorem, we can count out
  the remainder of $Z(n)$ modulo $n$ for any positive integer
  $n$. However, we still have the problem as pointed out in \cite{WC}, i.e.\\

  \textbf{Problem 3}~Can we find an arithmetical function $f(n)$ such that
\begin{equation*}
 Z(n)\equiv f(n)\pmod{n}.
\end{equation*}

\textbf{Acknowledgements}~ The authors wish to thank Dr. Deyi Chen
for his kind help in calculation, especially in verifying the
conjecture.



\end{document}